\newcommand{\dia}{$\diamond$}
\newcommand{\Z}{\mathbb{Z}}
\newcommand{\Q}{\mathbb{Q}}
\newcommand{\F}{\mathbb{F}}
\newlength{\khov}
\newlength{\burg}
\newlength{\jmr}
\newlength{\smale}
\newtheorem{dfn}{Definition}[section]
\newtheorem{lemma}[dfn]{Lemma}
\newtheorem{cor}[dfn]{Corollary}
\newtheorem{ex}[dfn]{Example}
\newtheorem{thm}[dfn]{Theorem}
\newtheorem{prop}[dfn]{Proposition}
\renewcommand{\qed}{$\blacksquare$}
\newcommand{\rmv}[1]{}
\author{Qi Cheng} 
\email{qcheng@ou.edu}
\address{School of Computer Science, 
University of Oklahoma, Norman, OK \ 73019} 
\author{Shuhong Gao} 
\email{sgao@math.clemson.edu} 
\address{Department of Mathematical Sciences, 
Clemson University, Clemson, SC \ 29634-0975} 
\author{J.\ Maurice Rojas}
\email{rojas@math.tamu.edu} 
\address{TAMU 3368, College Station, TX \ 77843-3368} 
\thanks{Partially supported by NSF grant CCF-1409020, 
the American Institute of Mathematics, 
and MSRI (through REU grant DMS-1659138). }  
\author{Daqing Wan} 
\email{dwan@math.uci.edu} 
\address{Department of Mathematics, University of California, 
Irvine, CA\  92697-3875 }
\title{Counting Roots of Polynomials Over Prime Power Rings}
\begin{document}
\pagenumbering{gobble} 

\begin{abstract} 
Suppose $p$ is a prime, $t$ is a positive integer, and 
$f\!\in\!\Z[x]$ is a univariate polynomial of degree $d$ with coefficients 
of absolute value $<\!p^t$. We show that for any {\em fixed} $t$, we can 
compute the number of roots in $\Z/(p^t)$ of $f$ in deterministic 
time $(d+\log p)^{O(1)}$. This fixed parameter tractability appears 
to be new for $t\!\geq\!3$. A consequence for arithmetic geometry is that 
we can efficiently compute Igusa zeta functions $Z$, for univariate 
polynomials, assuming the degree of $Z$ is fixed.   
\end{abstract} 

\maketitle

\newpage 

\thispagestyle{plain} 
\pagenumbering{arabic} 
\setcounter{page}{1} 

\section{Introduction}
Given a prime $p$, and a polynomial $f\in \Z [x]$ of degree $d$ with 
coefficients of absolute value $<p^t$, it is
a basic problem to count the roots of $f$ in $ \Z/(p^t) $. Aside from 
its natural cryptological relevance, counting roots in $\Z/(p^t)$ 
is closely related to factoring polynomials over the $p$-adic 
rationals $\Q_p$ \cite{chistov,cantorqp,gnp}, and the latter problem is 
fundamental in polynomial-time factoring over the rationals \cite{lll}, 
the study of prime ideals in number fields \cite[Ch.\ 4 \& 6]{cohenant}, 
elliptic curve cryptography \cite{lauder},  the computation of 
zeta functions \cite{denefver,lauderwan, dwan, chambert}, and 
the detection of rational points on curves \cite{bjornbm}. 
 
There is surprisingly little written about root counting in 
$\Z/(p^t)$ for $ t\geq 2 $: While an algorithm for counting roots of 
$f$ in $\Z/(p^t)$ in time polynomial in $d+\log p$ has been 
known in the case $t=1$ for many decades (just compute the degree of 
$\gcd(x^p-x,f)$ in $\F_p[x]$), the case $t=2$ was just solved in 2017 by 
some of our students \cite{hjpw}. The cases $t\!\geq\!3$, which we solve 
here, appeared to be completely open. One complication with $t\geq 2$ 
is that polynomials in $(\Z/(p^t))[x]$ do not have unique factorization, 
thus obstructing a simple use of polynomial gcd. 

However, certain basic facts can be established quickly. 
For instance, the number of roots can be exponential in $\log p$. 
(It is natural to use $\log p$, among other parameters, to measure the size 
of a polynomial since it take $O(t\log p)$ bits to specify a solution in 
$\Z/(p^t)$.) The quadratic polynomial $ x^2 = 0 $, which has roots 
$ 0, p, 2p, \cdots, (p-1)p $ in $ \Z/(p^2)$, is such an example. 
This is why we focus on computing the number of roots of $f$, instead of 
listing or searching for the roots in $\Z/(p^t)$. 

Let $N_t(f)$ denote the number of roots of $f$
in $\Z/(p^t)$ (setting $N_0(f)\!:=\!1$). The
{\em Poincare series for $f$} is $P(x)\!:=\!\sum^\infty_{t=0} N_t(f)x^t$.
Assuming $P(x)$ is a rational function in $x$, one can reasonably recover
$N_t(f)$ for any $t$ via standard generating function techniques. That $P(x)$ 
is in fact a rational function in $x$ was first proved in 1974 by Igusa (in
the course of deriving a new class of zeta functions \cite{igusa}),
applying resolution of singularities. Denef found a new proof 
(using $p$-adic cell decomposition \cite{denef}) leading to more
algorithmic approaches later. While this in principle gives us a 
way to compute $N_t(f)$, there are few papers studying the computational 
complexity of Igusa zeta functions \cite{zg}. Our work here thus also 
contributes in the direction of arithmetic geometry by significantly improving 
\cite{zg}, where it is assumed  that $f(x)$ splits completely over $\Q$. 

To better describe our results, let us start with a naive description 
of the first key idea: How do roots in $ \Z/(p) $ lift to roots in 
$ \Z/(p^t) $? A simple root of $f$ in $ \Z/(p) $ can be lifted uniquely to
a root in $ \Z/(p^t) $, according to the classical Hensel's lemma 
(see, e.g., \cite{gouvea}). But a root with multiplicity $\geq 2$ in $ \Z/(p) $ 
can potentially be the image (under mod $p$ reduction) of many 
roots in $\Z/(p^t)$, as illustrated by our earlier example $f(x)\!=\!x^2$. 
Or a root may not be liftable at all, e.g., $ x^2 + p = 0 $ has no roots mod 
$p^2$, even though it has a root mod $p$. More to the point, if one wants a 
fast deterministic algorithm, one can not assume that 
one has access to individual roots. This is because  it is still an open problem whether there exists 
a deterministic polynomial time algorithm for finding roots of polynomials
 modulo $p$, see for example \cite{Gao01,kedlaya}.

Nevertheless, we have overcome this difficulty and found a way to keep track 
of how to correctly lift roots of any multiplicity. 
\begin{thm}\label{main}
There is a deterministic algorithm that computes the number, 
$N_t(f)$, of roots in $\Z/(p^t)$ of $f$ in time $(d+\log(p)+2^t)^{O(1)}$.  
\end{thm}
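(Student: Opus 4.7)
My plan is to build a recursive algorithm whose depth is at most $t$, with bounded branching at each level, so that the total number of subproblems is $2^{O(t)}$ and each is solvable in $(d+\log p)^{O(1)}$ time.

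The first step is a reduction to a canonical form. Compute the squarefree factorization $\bar f=\prod_{k\geq 1}h_k^k$ of $\bar f:=f\bmod p$ in $\F_p[x]$, which is deterministic and runs in $(d+\log p)^{O(1)}$ time. Since the factors $h_k^k$ are pairwise coprime modulo $p$, Hensel's lemma lifts this to $f\equiv\prod_k F_k\pmod{p^t}$ with $\bar F_k=h_k^k$, and by the Chinese Remainder Theorem $N_t(f)=\sum_k N_t(F_k)$. Inside each $h_k$, isolate the product of its linear factors using $\gcd(h_k,x^p-x)$, Hensel-lift the resulting coprime factorization of $F_k$, and discard the non-linear piece (whose roots do not lie in $\F_p$ and hence not in $\Z/(p^t)$). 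It therefore suffices to compute $N_t(F)$ for $F$ with $\bar F=h^e$, where $h=\prod_{i=1}^s(x-\alpha_i)\in\F_p[x]$ is squarefree.

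For $e=1$, Hensel gives $N_t(F)=\deg h$. For $e\geq 2$, every root of $F$ mod $p^t$ has the form $\alpha_i+p\sigma$ with $\sigma\in\Z/(p^{t-1})$ satisfying $F(\alpha_i+p\sigma)\equiv 0\pmod{p^t}$. To process all $i$ uniformly \emph{without ever extracting any $\alpha_i$}, I would work in the ring $R:=(\Z/(p^t))[x]/(H(x))$ for $H$ any integer-coefficient lift of $h$; since $\bar H$ is squarefree, Hensel's lemma gives $R\cong\prod_{i=1}^s\Z/(p^t)$ via $x\mapsto\tilde\alpha_i$, the unique lift of $\alpha_i$. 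Expand $F(x+py)\in R[y]$: the $p$-valuation in $R$ is componentwise, and for each component the minimum valuation $m_i$ of the $y$-coefficients determines the lifting behavior. If $m_i\geq t$, component $i$ contributes $p^{t-1}$; otherwise we reduce to counting roots of the primitive polynomial $F(\tilde\alpha_i+py)/p^{m_i}$ in $\Z/(p^{t-m_i})$, a strictly smaller instance. Grouping components by their valuation profiles is achieved by iteratively computing $\gcd$s in $\F_p[x]$ of appropriate coefficient reductions against $h$, followed by Hensel lifting, which splits $R$ (and hence the counting problem) into coprime pieces each again in the canonical form.

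The main technical obstacle is verifying that this valuation-based splitting is implementable entirely via $\F_p[x]$-arithmetic plus Hensel lifting (so that no deterministic root-finding over $\F_p$ is required), that the canonical form $\bar G=h'^{e'}$ with $h'$ squarefree and linear is preserved across splits (perhaps after an extra squarefree-factorization pass), and that each level introduces only a constant number of subproblems. Each recursive call strictly decreases $t$ by at least $1$, and combining this with the base case $t=1$ (where $N_1(f)=\deg\gcd(\bar f,x^p-x)$ is computable in $(d+\log p)^{O(1)}$ time) yields the stated bound $(d+\log p+2^t)^{O(1)}$. The delicate point is showing that the partition by valuation is coarse enough at each level — depending only on finitely many thresholds — so that the branching stays bounded independently of $d$ and $p$; this is where the fact that all arithmetic remains inside $\F_p[x]$ and its Hensel lifts plays the essential role.
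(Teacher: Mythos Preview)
Your overall plan is essentially the paper's: recursively expand $f$ about lifts of its mod-$p$ roots, process the roots in batches by computing in a quotient ring rather than extracting any individual root, and split the ring via $\gcd$'s in $\F_p[x]$ followed by lifting. Your ring $R=(\Z/p^t)[x]/(H)$ plays the same role as the paper's triangular ideals $I_k\subset(\Z/p^t)[x_1,\ldots,x_k]$, and your Hensel lifts do what the paper's Teichmuller lifts do (make the successive splittings canonical and mutually compatible). So the architecture is right.

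The gap is the complexity bound. You assert that ``each level introduces only a constant number of subproblems'' and that the branching ``stays bounded independently of $d$ and $p$,'' but you give no mechanism for this, and the first statement is in fact false. What actually controls the branching is a degree bound you never state: after $k$ successive substitutions $x\mapsto \tilde\alpha+py$, every monomial containing the newest variable to power $j$ carries a factor $p^{kj}$, so modulo $p^t$ the polynomial you hand to the next level has degree at most $t/k$ in that variable. This simultaneously bounds (i) the number of distinct valuation thresholds you can see, (ii) the number of multiplicity classes in the next squarefree factorization, and (iii) the $\F_p$-dimension contributed by the new variable to the ring you compute in. It yields $\prod_{k\ge 1}(t/k)=t^t/t!<e^t$ for both the tree size and the ring dimension, which is $(2^t)^{O(1)}$ as required. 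Without this observation your argument allows branching of order $t$ at every one of $t$ levels, giving $t^t=2^{t\log t}$, which is \emph{not} $(2^t)^{O(1)}$; and nothing prevents your iterated ring $R$ from having $\F_p$-dimension growing with $d$ rather than with $t$ alone. The paper makes exactly this $t/k$ degree bound the heart of its proof of the complexity claim.
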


\noindent 
Theorem \ref{main} is proved in Section \ref{sec:gen}. 
Note that Theorem \ref{main} implies that if $ t= O(\log\log p) $ then there 
is a deterministic $(d+\log p)^{O(1)}$ algorithm to count the roots of $f$ 
in $ \Z/(p^t) $.  We are unaware of any earlier algorithm achieving this 
complexity bound, even if randomness is allowed.

Our main technical innovations are the following: 
\begin{itemize}
\item We use ideals in the ring $\Z_p[x_1,\ldots,x_k]$ of multivariate 
polynomials over the $p$-adic integers to keep 
track of the roots of $f$ in $\Z/(p^t)$. 
  More precisely, from the expansion:
  \[ f(x_1+ px_2 + \cdots + p^k x_{k-1}) = g_1 (x_1) + p g_2(x_1, x_2) + p^2
    g_3 (x_1, x_2, x_3) + \cdots  \]
  we build a collection of ideals in $\Z_p[x_1,\ldots,x_k]$, starting from 
$ (g_1 (x_1)) $. 
  We can then decompose the ideals  
  according to multiplicity type and rationality. This process
  produces a tree of ideals which will ultimately encode the 
summands making up our final count of roots. 
\item The expansion above is not unique. (For example, adding $ p $ to $ g_1 $
  and subtracting $ 1 $ from $ g_2 $  gives us another expansion.)  
  However, we manage to keep most of our computation within $ \Z/(p) $, 
  and maintain uniformity for the roots of our intermediate ideals,  
  by using Teichmuller lifting (described in Section \ref{sec:firstteich}).  
\end{itemize}

\section{Overview of Our Approach}
To count the number of roots in $ \Z/(p^t)$ of $f\in \Z[x] $, 
our algorithm follows a divide-and-conquer strategy. First,  
factor $ f $ over $ \F_p $ as follows:   
\begin{equation}\label{modpfac}
 f(x) = f_1 (x) f_2^2 (x) f_3^3 (x) ... f_l^l(x) g(x)    \pmod{ p}, 
\end{equation}
where each $f_i$ is a monic polynomial over $ \F_p $ that can be split into a 
product of distinct linear factors over $\F_p$, and the $f_i$ are pairwise
relatively prime, and $ g(x) $ is free of linear factors in $ \F_p[x] $.
For an element $ \alpha\in \F_p $, we call its pre-image
under the natural map $ \Z \rightarrow \F_p $
a lift of $ \alpha $ to $ \Z $. Similarly, we can define a lift
of $ \alpha $ to $ \Z_p $ (the $p$-adic integers) or to $ \Z/(p^t) $.
We extend the concept to polynomials in $ \F_p [x] $.
The core of our algorithm counts 
how many roots of $ f $ in $ \Z/(p^t) $ are lifts of roots of $ f_i $ in 
$ \F_p $, for each $ i $ .
For $ f_1 $,  by Hensel's lifting
lemma, the answer should be $ \deg f_1 $ for all $ t $.
For other $ f_i $, however, 
Hensel's lemma will not apply, so we run our 
algorithm on the pair $(f,m)$, where $ m $ is the lift
of $ f_i $ to $ \Z [x] $,   for each $i\in\{2,\ldots,l\} $, 
to see how many lifts (to roots of $f$ in $\Z/(p^t)$) 
are produced by the roots of $f_i$ in $\Z/(p)$.
The final count will be the summation of the results over all the $ f_i $, 
since the roots of $f$ in $ \Z/(p^t) $  are partitioned by the roots of 
the $f_i$.

The first step of the algorithm (when applied to a pair $(f,m)$) 
is to find the maximum positive integer
$ s $ such that there exists a polynomial 
such that 
\[ f(x_1 + p x_2) = p^s g(x_1, x_2)  \pmod{
    (m(x_1), p^t)}.\]
We may assume that
\[ g(x_1, x_2) = \sum_{0\leq j < t} g_j(x_1) x_2^j,  \]
and for all $ j $, either $g_j=0$ or $\gcd(m(x_1), g_j(x_1)) =1$ over $ \F_p $. 
(Otherwise some $f_i$ can be split further, and we
restart the algorithm with new $m$'s of smaller degrees.) 
Since $m| f$ over  $ \F_p[x] $, such $ s $ and $ g $ exist,
and can be found efficiently. 

If $ s \geq t $, then each root of $ m $  in
$ \F_p $ lifts to $ p^{t-1} $ roots of $ f $ in $ \Z/(p^t) $.

If $ s< t$,
let $r\in\F_p$ be any root of $m$ and let $r'$ be the corresponding lifted 
root of $ m $ in $ \Z_p$.
We then have
\[  f(r' + ap) = p^s g(r',a)  \pmod{ p^t}. \]
So $r' + ap$ is a root in $ \Z/(p^t)$ for $f$ if and only if 
  \[  g(r',a) = 0 \pmod{p^{t-s}}. \] 

 The preceding argument leads us to the following result. 
  \begin{prop}
    The number of roots in $\Z/(p^t)$ of $f$ 
    that are lifts of the roots of $m \pmod{p} $ 
    is equal to $p^{s-1}$ times the 
    number of solutions in $(\Z/(p^{t-s}))^2$ of the $2\times 2$ polynomial 
    system (in the variables $(x_1, x_2)$) below:
    \begin{equation}
\begin{split}
m(x_1)&=0 \\
g(x_1, x_2)&=0
\end{split}  \label{eqnpt}
    \end{equation}
  \end{prop}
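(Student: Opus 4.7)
My plan is to count roots of $f$ in $\Z/(p^t)$ lying above each fixed root $r\in\F_p$ of $m$, use the given expansion to recast the defining equation as a congruence on $g$, and then reorganize the resulting sum as a count of solutions of the $2\times 2$ system in (\ref{eqnpt}). Because $m$ splits into distinct linear factors over $\F_p$ by construction of the $f_i$ at the start of Section~2, Hensel's lemma gives, for each root $r\in\F_p$ of $m$, a unique lift $r'\in\Z_p$ with $m(r')=0$. Any root of $f$ in $\Z/(p^t)$ that reduces to $r$ modulo $p$ can be written uniquely as $r'+ap\bmod p^t$ with $a\in\Z/(p^{t-1})$. Substituting $x_1=r'$ into $f(x_1+px_2)\equiv p^s g(x_1,x_2)\pmod{(m(x_1),p^t)}$ and using $m(r')=0$, the equation $f(r'+ap)\equiv 0\pmod{p^t}$ becomes $g(r',a)\equiv 0\pmod{p^{t-s}}$.

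Next, I would count the valid $a$'s for each $r$ and then sum. Since $g(r',a)\bmod p^{t-s}$ depends only on $a\bmod p^{t-s}$ while $a$ ranges over $\Z/(p^{t-1})$, each solution $a_0\in\Z/(p^{t-s})$ of $g(r',a_0)\equiv 0\pmod{p^{t-s}}$ lifts to exactly $p^{(t-1)-(t-s)}=p^{s-1}$ values of $a$. Hence the contribution of $r$ equals $p^{s-1}\cdot N_r$, where $N_r:=|\{a_0\in\Z/(p^{t-s}):g(r',a_0)\equiv 0\pmod{p^{t-s}}\}|$. Applying Hensel again to the separable polynomial $m$, the reductions $r'\bmod p^{t-s}$ run over exactly the roots of $m$ in $\Z/(p^{t-s})$ as $r$ varies over the roots of $m$ in $\F_p$, and for each such $r$ the quantity $N_r$ is precisely the number of $x_2\in\Z/(p^{t-s})$ with $g(x_1,x_2)\equiv 0\pmod{p^{t-s}}$ for $x_1=r'\bmod p^{t-s}$. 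Summing $p^{s-1}N_r$ across these $r$ therefore gives $p^{s-1}$ times the number of pairs $(x_1,x_2)\in(\Z/(p^{t-s}))^2$ with $m(x_1)\equiv g(x_1,x_2)\equiv 0\pmod{p^{t-s}}$, which is the stated formula.

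The hard part is really bookkeeping: one must verify (i) uniqueness of the decomposition $r'+ap\bmod p^t$ as $a$ runs over $\Z/(p^{t-1})$; (ii) that substituting the exact Hensel lift $x_1=r'$ kills the $m(x_1)$ factor in the modulus $(m(x_1),p^t)$ so the expansion yields a genuine congruence mod $p^t$; and (iii) the Hensel bijection between roots of $m$ in $\F_p$ and in $\Z/(p^{t-s})$. All three rely on the hypothesis that $m$ is separable with all roots already in $\F_p$, which is built into the setup.
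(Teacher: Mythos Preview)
Your argument is correct and is essentially the same as the paper's: the paper derives $f(r'+ap)\equiv p^s g(r',a)\pmod{p^t}$ for each Hensel lift $r'$ of a root of $m$, reduces to $g(r',a)\equiv 0\pmod{p^{t-s}}$, and then states the proposition as the resulting count. You have simply made explicit the bookkeeping steps (the $p^{s-1}$ overcount as $a$ ranges over $\Z/(p^{t-1})$ versus $\Z/(p^{t-s})$, and the Hensel bijection between roots of $m$ in $\F_p$ and in $\Z/(p^{t-s})$) that the paper leaves implicit.
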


  There is a dichotomy corollary from the above proposition.

  \begin{cor}
    If $ m^2 | f $ in $ \F_p[x] $, and $ t \geq 2 $ , then any root of $ m 
    $ in $ \F_p $ is either not liftable to a root in $ \Z/(p^t) $ of 
$ f $, or can be lifted to at
    least $ p $ roots of $ f $ in $ \Z/(p^t) $.
  \end{cor}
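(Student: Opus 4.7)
The plan is to bypass the Proposition entirely and proceed directly via Taylor expansion around $r' \in \Z_p$, a fixed lift of an arbitrary root $r$ of $m$ in $\F_p$. The crucial consequence of the hypothesis $m^2 \mid f$ in $\F_p[x]$ is that $(x-r)^2$ divides $f$ in $\F_p[x]$, so $f(r) \equiv f'(r) \equiv 0 \pmod p$. The argument then exploits the fact that, once some lift of $r$ is known to be a root in $\Z/(p^t)$, the positive $p$-adic valuation of $f'$ at every such lift forces an entire coset of lifts to remain roots.

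Concretely, I would suppose $r$ is liftable, fix $a_0 \in \Z$ with $f(r'+a_0 p) \equiv 0 \pmod{p^t}$, and consider the $p$ candidate lifts
\[
u_b := r' + a_0 p + b\, p^{t-1}, \qquad b = 0, 1, \ldots, p-1,
\]
which are pairwise distinct mod $p^t$ (here $t \geq 2$ is used, so $p^{t-1}$ is nonzero in $\Z/(p^t)$) and each reduces to $r$ mod $p$. Using the standard fact that $f^{(k)}(r')/k! \in \Z_p$ for $f \in \Z[x]$, I would expand via Taylor's formula with $u = r' + a_0 p$ and $v = b\, p^{t-1}$:
\[
f(u+v) = f(u) + v\, f'(u) + \sum_{k \geq 2} \frac{f^{(k)}(u)}{k!}\, v^k.
\]
The first term vanishes mod $p^t$ by assumption; the second has $p$-adic valuation at least $t$, since $v$ contributes $t-1$ and $f'(u) \equiv f'(r) \equiv 0 \pmod{p}$ contributes $1$; each higher term has valuation at least $2(t-1) \geq t$, using $t \geq 2$. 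Hence $f(u_b) \equiv 0 \pmod{p^t}$ for every $b$, producing the required $p$ distinct lifts of $r$.

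The only nontrivial bookkeeping is the inequality $2(t-1) \geq t$, which is exactly the threshold explaining why the hypothesis $t \geq 2$ is needed; I anticipate no real obstacle.
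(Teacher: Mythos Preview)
Your argument is correct. Once a single lift $u = r' + a_0 p$ of $r$ is a root mod $p^t$, the Taylor estimate shows every perturbation $u + b\,p^{t-1}$ remains a root: the hypothesis $m^2 \mid f$ in $\F_p[x]$ gives $(x-r)^2 \mid f$ there, hence $p \mid f'(u)$, so the linear term has valuation at least $(t-1)+1=t$, while the inequality $2(t-1) \geq t$ dispatches all higher terms. The $p$ perturbations are distinct mod $p^t$ precisely because $t\geq 2$.

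This is a genuinely different route from the paper's. The paper deduces the Corollary from the preceding Proposition, which writes the number of lifts as $p^{s-1}$ times a solution count, where $s$ is the largest exponent with $f(x_1+px_2)\equiv p^s g(x_1,x_2)\pmod{(m(x_1),p^t)}$; the dichotomy then comes from the case split $s\geq 2$ (giving the factor $p^{s-1}\geq p$) versus $s=1$ (which, under the running coprimality hypothesis on the coefficients of $g$, forces zero lifts). Your argument is more elementary and self-contained for this particular statement: it never introduces $s$ or $g$ and needs no side hypothesis beyond $m^2\mid f$. The paper's route, on the other hand, is the one that feeds the algorithm---the integer $s$ and the reduced polynomial $g$ are exactly the data the recursion tracks---so for them the Corollary is a free byproduct of machinery they have to build anyway.
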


\subsection{The algorithm for $ t=3 $ }
Recall that our algorithm begins by seeking the maximal positive integer
$ s $ such that there is a polynomial $ g $ satisfying  
\[ f(x_1 + p x_2) = p^s g(x_1, x_2)  \pmod{
    (m(x_1), p^t)},\]
where $ m(x)\in \Z [x] $ and $ m^2 | f $ over $ \F_p $.  
If $s = 1$ then we must have 
\[ f = g' m^2 + p g   \]
for some polynomials $ g' $ and $ g $ with 
$ \gcd(m,g)=1 $ over $ \F_p $. 
None of the roots of $m$ in $ \F_p $ can then be lifted. 

If $s = 2$ then we have  $ f(x_1+px_2) = p^2 g(x_1, x_2) \pmod{m(x_1), p^3}$.  
\begin{cor}
The number of roots in $ \Z/(p^3) $ of $ f $ that
are lifts of roots of $ m\pmod{p} $ is equal to
$p$ times the number of roots in $\F^2_p$ of the $2\times 2$ 
polynomial system  below:
\begin{equation}
\begin{split}
m(x_1)&=0 \\
     g(x_1, x_2)&=0
\end{split} \label{eqnp3}
\end{equation}
which can be calculated in deterministic polynomial time. 
\end{cor}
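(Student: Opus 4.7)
My plan is to treat the two halves of the corollary separately. \textbf{The equality} is the direct specialization of the preceding Proposition to $t=3$ and $s=2$: then $p^{s-1}=p$ and $\Z/(p^{t-s})=\F_p$, so the Proposition's formula collapses to $p\cdot|\{(a,b)\in\F_p^2 : m(a)=g(a,b)=0\}|$. Nothing new is required for this half.

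\textbf{The complexity claim} is the substantive half. Let $N$ denote the count above. The structural fact I would exploit is that, by construction, $m$ is the lift of some $f_i$ from \eqref{modpfac}, so $m$ is squarefree over $\F_p$ and splits into distinct linear factors; equivalently, $m(x_1)\mid x_1^p-x_1$. Hence $A:=\F_p[x_1]/m(x_1)$ is isomorphic (as an $\F_p$-algebra) to $\prod_{r:\,m(r)=0}\F_p$, and
\[
N\;=\;\sum_{r:\,m(r)=0}\deg_{x_2}\!\gcd\bigl(g(r,x_2),\,x_2^p-x_2\bigr).
\]
Because deterministic root-finding mod $p$ is still open (see \cite{Gao01,kedlaya}), I would not try to enumerate the $r$'s individually. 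Instead my plan is to compute this gcd uniformly inside $A[x_2]$, so that each root $r$ contributes automatically through the Chinese remainder decomposition of $A$.

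\textbf{The main obstacle} is that $A$ is a product of fields rather than a field, so the Euclidean algorithm in $A[x_2]$ can stall whenever a leading coefficient $c(x_1)\in A$ happens to be a zero-divisor. I would resolve this by the standard dynamic-evaluation (D5-style) trick: whenever such a $c$ is encountered, set $d(x_1)=\gcd(c,m)$ in $\F_p[x_1]$, split $m=d\cdot(m/d)$, and recurse separately in the two rings $\F_p[x_1]/d$ (in which $c\equiv 0$, so the offending term drops out) and $\F_p[x_1]/(m/d)$ (in which $c$ is a unit, since $m$ is squarefree and $d=\gcd(c,m)$). Along every branch $\deg m$ strictly decreases, so the recursion tree has at most $\deg m$ leaves and total cost $(\deg f+\log p)^{O(1)}$. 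At each leaf $j$ the Euclidean computation runs to completion, and the final $x_2$-gcd has uniform degree across the $\deg m_j$ $\F_p$-components of $\F_p[x_1]/m_j$, so the contributions $\deg m_j\cdot\deg_{x_2}(\gcd_j)$ sum by CRT to $N$. An equivalent and slightly cleaner implementation is to compute $\dim_{\F_p}\F_p[x_1,x_2]/\bigl(m(x_1),g(x_1,x_2),x_2^p-x_2\bigr)$ by a lex Gr\"obner basis; the quotient is automatically reduced because the ideal contains both $x_1^p-x_1$ and $x_2^p-x_2$, and so this dimension equals $N$ on the nose.
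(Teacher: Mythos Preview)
Your argument is correct on both halves. The equality is exactly the Proposition at $t=3$, $s=2$, as you say, and your two suggested ways of evaluating the $\F_p^2$-count (D5-style gcd in $(\F_p[x_1]/m)[x_2]$, or $\dim_{\F_p}\F_p[x_1,x_2]/(m,g,x_2^p-x_2)$) are both valid and run in time $(\deg f+\log p)^{O(1)}$; the quotient is reduced because it is a further quotient of $\F_p[x_1,x_2]/(x_1^p-x_1,x_2^p-x_2)\cong\F_p^{p^2}$, so its dimension is the bare point count.

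The paper, however, takes a different and more explicit route for this particular corollary. It first observes that $\deg_{x_2}g\le 2$ (any term $x_2^j$ in $f(x_1+px_2)$ carries a factor $p^j$, so modulo $p^3$ only $j\le 2$ survive, and after dividing by $p^s=p^2$ the bound persists). The linear case is then trivial. For the quadratic case the paper uses the companion matrix $M$ of $m$ and the \emph{resultant-style} identity $\det g(M,x_2)=\prod_{m(\alpha)=0} g(\alpha,x_2)$ to derive a closed formula: the count equals $E-\deg\gcd(m,X)$, where $E$ is the number of $\F_p$-roots of $\det g(M,x_2)$ counted with multiplicity and $X(x_1)$ is the discriminant of $g$ in $x_2$. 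Your approach is more uniform---it never needs the degree bound on $x_2$ or the linear/quadratic case split, and it is essentially what the paper eventually does in Section~\ref{sec:gen} and the Gr\"obner discussion for general $t$---whereas the paper's companion-matrix argument yields an explicit, self-contained formula tailored to $t=3$ that avoids any dynamic splitting of $m$ beyond the preprocessing already built into the assumption that each nonzero $g_j$ is coprime to $m$.
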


Note that since the degree of $ x_2 $ in $ g $ is
at most $ 2 $ , any root of $m$ in
$\Z/(p)$ can be lifted to at most $2 p$ roots in $ \Z/(p^3)$.

If $ g(x_1, x_2)  $ is linear in $ x_2 $ , then counting points for 
(\ref{eqnp3}) is easy. The following theorem covers the other case.

\begin{thm}
  Assume that $ g(x_1, x_2) = x_2^2 + g'(x_1, x_2)  $,
  where the degree of $ x_2 $ in   $ g' $ is less than  $ 2 $.
Let $ M $  be the companion matrix of $ m$. 
Let $ X(x_1) $ be the discriminant  of the second
equation in (\ref{eqnp3}), viewed as a polynomial
in $x_2$.  
Let $ D(x_2) $ be the determinant of the matrix $ g(M, x_2) $. 
  Let $ E $ be the number of solutions of $ D(x_2) $ over $ \F_p $, counting
  multiplicity.
  The number of solutions of the $2\times 2$ polynomial system (\ref{eqnp3})  
is equal to $ E - \deg \gcd(m, X).$ 
\end{thm}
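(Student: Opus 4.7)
The plan is to express $D(x_2)$ as a product of the quadratic factors $g(\alpha, x_2)$ as $\alpha$ ranges over the $\F_p$-roots of $m$, then compare the multiplicity-weighted root count $E$ against the actual $\F_p^2$-solution count of (\ref{eqnp3}). The discrepancy will come entirely from those $\alpha$ for which $g(\alpha, x_2)$ has a repeated root.

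Write $g(x_1, x_2) = x_2^2 + a(x_1) x_2 + b(x_1)$, so $X(x_1) = a(x_1)^2 - 4 b(x_1)$. Recall from the setup preceding the theorem that $m$ reduces to a product of distinct linear factors over $\F_p$; let $\alpha_1, \ldots, \alpha_n \in \F_p$ denote its roots, where $n = \deg m$. The companion matrix $M$ is then diagonalizable over $\F_p$ with eigenvalues $\alpha_1, \ldots, \alpha_n$, and writing $P^{-1} M P = \mathrm{diag}(\alpha_1, \ldots, \alpha_n)$ (treating $x_2$ as a scalar parameter) gives
\[
D(x_2) \;=\; \det g(M, x_2) \;=\; \prod_{i=1}^n g(\alpha_i, x_2) \;\in\; \F_p[x_2].
\]

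For each $i$, let $s(\alpha_i)$ be the number of $\F_p$-roots of $g(\alpha_i, x_2)$ counted with multiplicity, and $d(\alpha_i)$ the number of distinct ones. Additivity of multiplicities under products then yields $E = \sum_{i=1}^n s(\alpha_i)$, while the $\F_p^2$-solution count of (\ref{eqnp3}) equals $\sum_{i=1}^n d(\alpha_i)$. Because $g(\alpha_i, x_2)$ is a monic quadratic over $\F_p$, a short case analysis gives $s(\alpha_i) - d(\alpha_i) \in \{0, 1\}$, with value $1$ precisely when $g(\alpha_i, x_2)$ has a repeated root; in that case the repeated root automatically lies in $\F_p$ (namely $-a(\alpha_i)/2$ when $p$ is odd, and analogously for $p = 2$). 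Since this repeated-root condition is exactly $X(\alpha_i) = 0$,
\[
E - \#\bigl\{\F_p^2\text{-solutions of (\ref{eqnp3})}\bigr\} \;=\; \bigl|\{i : X(\alpha_i) = 0\}\bigr| \;=\; \deg \gcd(m, X),
\]
the last equality holding because $m$ is squarefree over $\F_p$ with all roots in $\F_p$.

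The main subtlety is the multiplicity bookkeeping in the penultimate paragraph: one must verify both that every repeated root of $g(\alpha_i, x_2)$ actually lies in $\F_p$ (so it genuinely contributes multiplicity $2$ to $E$) and that $s$ and $d$ can differ only through a repeated root. A secondary technical point is the characteristic-two case, where $X$ collapses to $a^2$; here $X(\alpha) = 0$ forces $g(\alpha, x_2) = x_2^2 + b(\alpha)$, which is a perfect square in $\F_p[x_2]$ since the squaring map is a bijection on $\F_2$, so the discriminant still correctly detects the double-root scenario. The remaining ingredients -- the factorization of $D$ and the identification of $\deg\gcd(m,X)$ with $|\{i : X(\alpha_i) = 0\}|$ -- are essentially immediate from $M$'s diagonalizability and $m$'s separability over $\F_p$.
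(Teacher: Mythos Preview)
Your argument is correct and follows the same route as the paper: diagonalize $M$ to factor $D(x_2)=\prod_i g(\alpha_i,x_2)$, then compare the multiplicity-weighted $\F_p$-root count of each quadratic factor to its distinct-root count. Your version is in fact more complete than the paper's, which only spells out the two extreme cases $m\mid X$ and $\gcd(m,X)=1$; you handle arbitrary $\gcd(m,X)$ uniformly and also make explicit the point (used tacitly in the paper) that a repeated root of a monic quadratic over $\F_p$ necessarily lies in $\F_p$, together with the characteristic-$2$ check.
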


\begin{proof}
  Suppose that over $ \F_p $ \[ m(x) = \prod_{i=1}^c (x-\alpha_i).  \]
  Then $ M = V \mathrm{diag}(\alpha_1, \alpha_2, \cdots, \alpha_c) V^{-1} $ for some invertible matrix $
  V\in \F_p^{c \times c } $,
  where $\mathrm{diag}$ maps a vector into a diagonal matrix in the obvious 
way.  So $g(M, x_2)= V \mathrm{diag}(g(\alpha_i, x_2), \ldots ) V^{-1}$. 
  We then have
  \[ D(x_2) = \prod_{i=1}^c g(\alpha_i, x_2).   \]
  If $ a_1 $ is a solution of $ D(x_2) $ then there must exist a root
  $ \alpha_i $
  of $ m $ such that $ a_1 $ is a solution of 
    \[  g(\alpha_i, a_1) = 0.  \]
  
  If $ m | X $, then for every root $ \alpha_i $ of $ m $, 
  the above equation 
  has a solution in $ \F_p $ with multiplicity two, so the number of solutions
  of (\ref{eqnp3}) is $ E/2 (= c) $.

  If $ \gcd(m, X) = 1 $,  the equation has two distinct roots (which may not
  lie in $ \F_p$), and 
  the total number solutions of (\ref{eqnp3}) is $ E $. 
\end{proof}

Assume that $f\in\Z[x]$ is not divisible by $p$. 
The preceding ideas are formalized in the following algorithm: 
\begin{algorithm}\label{algot3}
  \caption{The case $ t=3 $ }
\begin{algorithmic}[1]
  \Function{count}{$f(x)\in \Z[x]$, $ f(x) \not= 0 \pmod{p} $ }
  \State Factor
\[ f(x) = f_1 (x) f_2^2 (x) f_3^3 (x) ... f_n^n(x) g(x)    \pmod{ p}. \]
  \State $ count = \deg f_1 $ 
  \Comment{Every roots of $ f_1 $ can be lifted uniquely. }
  \State  Push $  f_2(x), f_3(x), \cdots, f_s (x)  $ into a stack 
  \While{ $ S \not= \emptyset $} 
  \State Pop a polynomial from the stack, find its lift to $ \Z $  and denote it by $ m(x) $ 
  \State Find the maximum $ s $ and a polynomial $ g(x_1, x_2) $ such that
\[ f(x_1 + p x_2) = p^s g(x_1, x_2)  \pmod{
    (m(x_1), p^t)}.\]
  \If{ $ s=3 $ }
  \State $count  \leftarrow count + p^2 \deg m $ 
  \Else \If{$ s=2 $ }
  \State Let $ g'(x_1) $ be the leading coefficient of $ g(x_1, x_2) $, viewed
  as a polynomial in $ x_2 $.   
  \If{ $ \gcd(m, g') $ in $ \F_p[x] $  is nontrivial }
  \State Find the nontrivial factorization $ m(x) = m_1 (x) m_2 (x) $ in $ \F_p[x] $  
  \State Push $ m_1 $  and $ m_2 $ into the stack
  \Else
  \State $ count \leftarrow $ the number of
  the $ \F_p $-points of (\ref{eqnp3})   
    \EndIf
    \EndIf
    \EndIf
  \EndWhile
  \State \Return count
  \EndFunction
\end{algorithmic}
\end{algorithm}

\section{From Taylor Series to Ideals}
\label{sec:tay} 
For any polynomial    $m(x)$ of degree $n$, define
\[ T_{m,j} (x,y) = \sum_{1\leq i \leq j} \frac{y^{i-1}}{i! } \frac{d^i m}{(dx)^i} (x).  \]
Note that if $ m\in \Z[x]$ 
then $  \frac{1}{i! } \frac{d^i m}{(dx)^i} (x) $,  
being a Taylor expansion coefficient, also lies in $\Z[x]$. 
So $ T_{m, j} $ is an integral multivariate polynomial for any $ j $.
Since $ T_{m,1} $ does not depend on $ y $,  we abbreviate $ T_{m,1} (x,y) $
by $ T_m(x) $.
The following lemma follows from a simple application of Taylor expansion:
\begin{lemma}
Let $m\in\Z[x]$ be an irreducible polynomial that 
splits completely, without repeated factors, into linear factors over 
$\F_p$. Let $r\in\F_p$ be any root of $m$ and let $r'\in\Z_p$ 
be the corresponding $p$-adic integer root of $m$. Then
   \[ m(r'+a p) = a p T_m (r)  \pmod{ p^2}.\]
   To put it in another way, we have the following congruence: 
   \[ m(x_1 + p x_2 ) \equiv p x_2  T_m (x_1)  \pmod{ m(x_1), p^2} \]
   in the ring $ \Z[x_1, x_2]. $ 
\end{lemma}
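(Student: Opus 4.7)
The plan is to prove this by direct Taylor expansion of $m$ around $x_1$, then reduce modulo $p^2$ and modulo $m(x_1)$ to obtain the desired congruence, and finally specialize to recover the first statement.

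First I would write out the full Taylor expansion in the polynomial ring $\Z[x_1,x_2]$:
\[ m(x_1 + px_2) \;=\; \sum_{i \geq 0} \frac{(px_2)^i}{i!}\, m^{(i)}(x_1) \;=\; \sum_{i \geq 0} p^i x_2^i \cdot \frac{1}{i!}\frac{d^i m}{(dx)^i}(x_1). \]
The key structural remark is that each coefficient $\frac{1}{i!}\frac{d^i m}{(dx)^i}(x_1)$ lies in $\Z[x_1]$ (a standard fact: the binomial coefficients are integral), so this identity holds in $\Z[x_1,x_2]$ and not merely in $\Q[x_1,x_2]$. Consequently all terms with $i \geq 2$ are divisible by $p^2$ and vanish modulo $p^2$, leaving
\[ m(x_1+px_2) \;\equiv\; m(x_1) \;+\; px_2\, m'(x_1) \pmod{p^2}. \]

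Next I would reduce modulo $m(x_1)$: the term $m(x_1)$ disappears and I obtain
\[ m(x_1+px_2) \;\equiv\; px_2\, m'(x_1) \pmod{m(x_1),\,p^2}. \]
Since $T_{m,1}(x_1,y)$ consists of the single summand $\frac{y^0}{1!} m'(x_1) = m'(x_1)$, the abbreviation $T_m(x_1) = m'(x_1)$ gives the displayed polynomial congruence precisely.

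For the first statement, I would substitute $x_1 = r'$ and $x_2 = a$ into the congruence in $\Z[x_1,x_2]$ and use $m(r')=0$ in $\Z_p$ to get $m(r'+ap) \equiv ap\, m'(r') \pmod{p^2}$. Since $r' \equiv r \pmod{p}$ by construction of the Hensel lift, we have $m'(r') \equiv m'(r) \pmod{p}$, and multiplying by $ap$ upgrades this to $ap\, m'(r') \equiv ap\, m'(r) = ap\, T_m(r) \pmod{p^2}$, which completes the proof. There is no real obstacle here: the only delicate point is verifying integrality of the Taylor coefficients so that the expansion is a legitimate identity in $\Z[x_1,x_2]$, which justifies both the reduction modulo $p^2$ and the reduction modulo $m(x_1)$. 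The hypothesis that $m$ splits into distinct linear factors mod $p$ is used only implicitly, to ensure that each root $r\in\F_p$ of $m$ admits a unique $p$-adic lift $r'$ via Hensel's lemma.
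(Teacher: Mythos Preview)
Your proof is correct and follows essentially the same route as the paper: a direct Taylor expansion together with the observation that the divided derivatives $\tfrac{1}{i!}\,m^{(i)}(x)$ are integral polynomials, so that terms with $i\geq 2$ die modulo $p^2$ and the $i=0$ term dies modulo $m(x_1)$. You even supply the small detail the paper leaves implicit, namely passing from $T_m(r')$ to $T_m(r)$ via $r'\equiv r\pmod p$.
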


That one can always associate an $r\in \Z/(p)$ to a root
             $r'\in\Z_p$ as above is an immediate consequence of the
             classical Hensel's Lemma \cite{gouvea}.
More generally, we have the following stronger result:  

\begin{lemma} \label{taylorgen}
Let $m\in\Z[x]$ be an irreducible polynomial that 
splits completely, without repeated factors, into linear factors over 
$\F_p$. Let $r\in\F_p$ be any root of $m$, 
and let $r'\in\Z_p$ be the corresponding $p$-adic integer root 
of $m$. Then for any positive integer $ u $, 
   \[ m(r'+a p) = a p T_{m, u-1}(r', a p)  \pmod{ p^u}. \]
   And in the ring $ \Z [x_1, x_2] $, we have
   \[  m(x_1 + p x_2 ) = x_2 p T_{m, u-1}(x_1,  p x_2 )  \pmod{ m(x_1), p^u}. \]
\end{lemma}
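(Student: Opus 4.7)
The plan is to derive the lemma as a direct consequence of the finite Taylor expansion of $m$ about $x_1$ with increment $p x_2$, together with two elementary $p$-adic bookkeeping observations. The polynomial identity (in $\Z[x_1,x_2]$) is the main content; the pointwise statement will fall out by specializing $x_1\!=\!r'$, $x_2\!=\!a$ and using $m(r')\!=\!0$ in $\Z_p$ (whose existence is exactly what the classical Hensel's Lemma provides under the hypothesis that $m$ splits completely into distinct linear factors over $\F_p$).

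First I would write the Taylor expansion
\[ m(x_1 + p x_2) \;=\; \sum_{i=0}^{\deg m} \frac{(px_2)^i}{i!}\, m^{(i)}(x_1) \;=\; m(x_1) \,+\, p x_2 \sum_{i=1}^{\deg m} \frac{(p x_2)^{i-1}}{i!}\, m^{(i)}(x_1), \]
so that the second sum is exactly $T_{m,\deg m}(x_1, p x_2)$ in the notation of Section~\ref{sec:tay}. Here I am using the standard fact that $\frac{1}{i!}m^{(i)}(x)$ equals the coefficient of $y^i$ in $m(x+y)$, and hence lies in $\Z[x]$; this is the integrality observation that makes every term in the sum an honest element of $\Z[x_1,x_2]$.

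Second, I would reduce modulo $(m(x_1),p^u)$. The term $m(x_1)$ is killed by reduction modulo $m(x_1)$. Every remaining term $\frac{(px_2)^i}{i!}m^{(i)}(x_1) = p^i x_2^i \cdot \frac{m^{(i)}(x_1)}{i!}$ with $i\!\geq\!u$ is divisible by $p^u$ as an element of $\Z[x_1,x_2]$ (again using integrality of $\frac{1}{i!}m^{(i)}$), so the sum may be truncated at $i\!=\!u-1$. What remains is precisely $p x_2\, T_{m,u-1}(x_1, p x_2)$, proving the second congruence.

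Finally, for the first congruence I would specialize $x_1 \mapsto r'$ and $x_2 \mapsto a$. Since $r' \in \Z_p$ is the Hensel lift of the simple root $r \in \F_p$, we have $m(r')=0$, so the specialization of the relation modulo $(m(x_1),p^u)$ yields $m(r'+ap) \equiv a p\, T_{m,u-1}(r',ap) \pmod{p^u}$, as claimed. The only real obstacle is the bookkeeping verification that each Taylor coefficient is integral and that the $p^i/i!$ factor is responsible for the mod-$p^u$ truncation; both are routine once the identity is written down carefully, so I do not anticipate a genuine technical difficulty.
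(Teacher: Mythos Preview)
Your proposal is correct and follows essentially the same route as the paper: a finite Taylor expansion of $m$ about $x_1$ (or $r'$), the integrality of $\tfrac{1}{i!}m^{(i)}$, and the observation that the $i\geq u$ terms carry a factor of $p^u$. The only cosmetic difference is that you prove the polynomial congruence first and then specialize, whereas the paper writes the pointwise version directly; the underlying argument is identical.
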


\begin{proof}
  By Taylor expansion:
  \begin{align*}
    m(r'+ ap) &= m(r') + \sum_{1\leq i < u} \frac{(a p)^i}{i! } \frac{d^i m}{(dx)^i} (r') \pmod{p^u}\\
    &= \sum_{1\leq i < u} \frac{(a p)^i}{i! } \frac{d^i m}{(dx)^i} (r')\pmod{p^u}\\
    &= a p \sum_{1\leq i < u} \frac{(a p)^{i-1}}{i! } \frac{d^i m}{(dx)^i} (r')\pmod{p^u}
  \end{align*}
  As observed earlier, $\frac{1}{i!} \frac{d^i m}{(dx)^i} (x) $ 
  is an integral polynomial (even when $ i > p-1 $), so we are done. 
\end{proof}

Note that  in the setting of Lemma~\ref{taylorgen}, $ T_{m, u-1}(r', a p) \equiv T_m (r') \not= 0
\pmod{p}. $ 

The following theorem is a generalization of the preceding lemmas to ideals.
\begin{thm}\label{Tayg}
  Let $ I $ be a ideal   in $ \Z_p [x_1, x_2, \cdots, x_{k-1}] $. 
  Assume that $ I \pmod{p} $ is a zero dimensional radical ideal in
  $ \F_p [x_1, x_2, \cdots, x_{k-1}] $
  with only rational roots.
  Let $ f(x_1, x_2, \cdots, x_k) $ be an integer polynomial whose degree on $
  x_k $  is less than $ p $. If
  $ f(r_1, r_2, \cdots, r_k) \equiv 0 \pmod{p^s} $ for
  every $ \Z_p $-root $ (r_1, r_2, \cdots, r_{k-1}) $ of $ I $,
  and every integer $ r_k $,
  then there must exist a polynomial $ g (x_1, x_2, \cdots, x_k) $ 
  such that \[ f(x_1, x_2, \cdots, x_k)
    \equiv p^s g (x_1, x_2, \cdots, x_k) \pmod{I}.  \]
\end{thm}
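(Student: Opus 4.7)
The plan is to reduce the theorem to a sub-claim about vanishing of univariate $x_k$-coefficients modulo $I$, and then prove that sub-claim by induction on $s$.

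First, I would write $f(x_1,\ldots,x_k)=\sum_{j=0}^{d} a_j(x_1,\ldots,x_{k-1})\,x_k^j$ with $d<p$. For any fixed $\Z_p$-root $\mathbf{r}=(r_1,\ldots,r_{k-1})$ of $I$, the univariate polynomial $\phi(x_k):=f(\mathbf{r},x_k)=\sum_j a_j(\mathbf{r})\,x_k^j\in\Z_p[x_k]$ satisfies $\phi(n)\equiv 0\pmod{p^s}$ for every integer $n$. Evaluating at $n=0,1,\ldots,d$ yields a linear system in the unknowns $a_j(\mathbf{r})$ whose matrix is the Vandermonde $(i^j)_{0\le i,j\le d}$. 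Since $d<p$, its determinant is a unit in $\Z_p$, so the system is invertible mod $p^s$, forcing $a_j(\mathbf{r})\equiv 0\pmod{p^s}$ for each $j$. Hence it suffices to prove the following sub-claim: \emph{if $a(x_1,\ldots,x_{k-1})\in\Z_p[x_1,\ldots,x_{k-1}]$ satisfies $a(\mathbf{r})\equiv 0\pmod{p^s}$ for every $\Z_p$-root $\mathbf{r}$ of $I$, then $a\in p^s\Z_p[x_1,\ldots,x_{k-1}]+I$.} Applying this to each $a_j$ and reassembling gives the desired $g$.

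I would prove the sub-claim by induction on $s$. For the base case $s=1$, reduce modulo $p$: the image $\bar{a}$ vanishes at every $\F_p$-root of $\bar{I}:=I\bmod p$, since by the assumption on $\bar I$ each such $\F_p$-root is the reduction of a (unique) $\Z_p$-root of $I$ via a Hensel-type lift of the idempotents in the étale $\F_p$-algebra $\F_p[\mathbf{x}]/\bar{I}\cong \prod \F_p$. Because $\bar{I}$ is radical and every point of its variety over $\overline{\F_p}$ already lies in $\F_p^{k-1}$, the strong Nullstellensatz yields $\bar{a}\in\bar{I}$. Lifting, $a\equiv p\,a_1\pmod I$ for some $a_1\in\Z_p[x_1,\ldots,x_{k-1}]$. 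For the inductive step, evaluate at any $\Z_p$-root $\mathbf{r}$ of $I$: $p\,a_1(\mathbf{r})=a(\mathbf{r})\equiv 0\pmod{p^s}$, so $a_1(\mathbf{r})\equiv 0\pmod{p^{s-1}}$. By the inductive hypothesis $a_1\equiv p^{s-1}h\pmod I$, whence $a\equiv p^s h\pmod I$.

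The main obstacle is the surjectivity part of the root correspondence used in the base case: namely, that every $\F_p$-root of $\bar I$ actually arises from a $\Z_p$-root of $I$, so that vanishing on the $\Z_p$-roots really does imply vanishing of $\bar a$ on all of $V(\bar I)$. This is where the hypotheses \emph{zero-dimensional, radical, with only rational roots} are essential: they make $\F_p[\mathbf{x}]/\bar I$ a product of copies of $\F_p$, so its orthogonal idempotents lift uniquely to $\Z_p[\mathbf{x}]/I$ by the standard idempotent-lifting argument, producing one $\Z_p$-root per $\F_p$-root. Once this correspondence is in hand, the remaining ingredients (Vandermonde inversion, strong Nullstellensatz, and induction) are routine.
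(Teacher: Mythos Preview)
Your argument follows the same induction-on-$s$ strategy the paper indicates, and it supplies precisely the details the paper leaves out: the Vandermonde inversion (using $\deg_{x_k}f<p$) to strip off the $x_k$-variable, and the Nullstellensatz step for the base case. So the overall shape is right and matches the paper.

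There is, however, one genuine gap at the point you yourself flag as the main obstacle. The claim that every $\F_p$-root of $\bar I$ lifts to a $\Z_p$-root of $I$ via idempotent lifting requires $\Z_p[x_1,\ldots,x_{k-1}]/I$ to be $p$-adically complete (or at least finitely generated as a $\Z_p$-module), and the theorem as stated does not assume this. Without it the lifting can fail, and in fact the theorem itself is false: take $k=2$, $I=(x_1^2-x_1,\;p\,x_1)\subset\Z_p[x_1]$. Then $\bar I=(x_1^2-x_1)$ is zero-dimensional, radical, with rational roots $\{0,1\}$, but the only $\Z_p$-root of $I$ is $0$. The polynomial $f(x_1,x_2)=x_1$ satisfies the hypothesis with $s=1$, yet $x_1\not\equiv p\,g\pmod I$ for any $g$, since in $\Z_p[x_1]/I\cong\Z_p\oplus(\Z/p)$ the class of $x_1$ is $p$-torsion but nonzero.

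In the paper's applications $I$ is always a Teichm\"uller lift presented in triangular form, so $\Z_p[x_1,\ldots,x_{k-1}]/I$ is free of finite rank over $\Z_p$, hence complete; your idempotent-lifting argument then goes through verbatim and yields $\Z_p[\mathbf x]/I\cong\Z_p^N$, giving exactly one $\Z_p$-root over each $\F_p$-root. You should add this hypothesis explicitly (e.g., ``$\Z_p[\mathbf x]/I$ is a finite $\Z_p$-module'' or ``$I$ is given in triangular form''): it is what the paper tacitly uses, and it is needed both for your proof and for the truth of the statement.
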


The theorem can be proved by induction on $ s $. 
Lemma~\ref{taylorgen} is basically the special case of Theorem~\ref{Tayg}
when $ s=1 , k =2 $, $ I = (m (x_1)) $ and $ f(x_1, x_2) = m(x_1+px_2) $.  
It is important that the ideal $ I \pmod{p} $ need to be radical, 
just like in Lemma~\ref{taylorgen}, $ m(x) $ need to be free
of repeated factors over $ \F_p $. 

\section{The Case $ t=4 $ and the Need for Teichmuller Lifting. }
\label{sec:firstteich} 
Here we work on the case $ t=4 $.
Earlier, we saw that $ m(x) $ can be
taken to be any lift of $ f_i $  to $ \Z [x] $.
In this section we will use Teichmuller lifting to
get some uniformity needed by our algorithm.
We start with
\[ f (x_1 + px_2) = p^s g(x_1, x_2) \pmod{m(x_1),p^4}. \]
The simplest subcase is $s=4$. 
Every root of $ m(x) $ can be lift to $ p^3 $ many roots of $ f $   
in $ \Z/p^4 $.

If $s =3 $,
we have

\begin{thm}
  The number of roots in $ \Z/(p^4) $ of $ f $ that
  are lifts of roots of $ m\pmod{p} $ is equal to
$p^2 $ times the number of roots in $\F^2_p$ of the $2\times 2$ 
polynomial system (in the variables $(x_1, x_2)$) below:
\begin{equation}
\begin{split}
m(x_1)&=0 \\
      g(x_1, x_2) &=0
\end{split} \label{eqnp4case2}
\end{equation}
which can be calculated in deterministic polynomial time. 
\end{thm}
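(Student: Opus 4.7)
The plan is to derive the counting formula as an immediate specialization of the Proposition from Section~2 to $t=4$, $s=3$, and then to verify polynomial-time computability via a linear-algebraic dimension count on a finite-dimensional $\F_p$-algebra. Applying the Proposition with $t=4$ and $s=3$ gives that the number of roots in $\Z/(p^4)$ of $f$ lifting roots of $m \pmod{p}$ equals $p^{s-1} = p^2$ times the number of solutions in $(\Z/(p^{t-s}))^2 = \F_p^2$ of the system $m(x_1)=0$, $g(x_1,x_2)=0$, which is the claimed formula.

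For the polynomial-time claim, set $A := \F_p[x_1, x_2]/(m(x_1), g(x_1, x_2))$. Under the standing assumption (from the setup in Section~2) that each nonzero coefficient $g_j(x_1)$ of $g$, viewed as a polynomial in $x_2$, is coprime to $m$ over $\F_p$, the top nonzero coefficient of $g$ in $x_2$ is a unit in $\F_p[x_1]/m$, so we may take $g$ to be monic in $x_2$ of some degree $e \leq 3$. Then $A$ has explicit $\F_p$-basis $\{x_1^i x_2^j : 0 \leq i < \deg m,\ 0 \leq j < e\}$, giving $\dim_{\F_p} A = e\, \deg m \leq 3\deg m$. I would then compute $\eta := x_2^p - x_2 \pmod{(m(x_1), g(x_1,x_2))}$ by repeated squaring in $A$ (polynomial time in $\log p$ and $\deg m$), form the matrix of multiplication-by-$\eta$ on $A$, and return the integer $N := \dim_{\F_p} A - \mathrm{rank}(\eta \cdot)$.

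To identify $N$ with the number of $\F_p$-points of (\ref{eqnp4case2}), note that since $m$ splits into distinct linear factors over $\F_p$, we have $m \mid x_1^p - x_1$ and hence $x_1^p - x_1 = 0$ already in $A$. Thus $A/(\eta)$ is a quotient of $\F_p[x_1, x_2]/(x_1^p - x_1, x_2^p - x_2) \cong \F_p^{p^2}$; any quotient of such a product of copies of $\F_p$ is again a product of copies of $\F_p$, so $A/(\eta) \cong \F_p^N$, whose $\F_p$-dimension equals the number of $\F_p$-rational points of the variety by the Nullstellensatz. The main obstacle sidestepped here is the absence of a known deterministic polynomial-time algorithm for finding the individual roots of $m$ in $\F_p$; the dimension count operates globally, requiring only modular exponentiation and linear algebra in a vector space of dimension at most $3\deg m$.
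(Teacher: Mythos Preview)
Your derivation of the counting formula is correct and matches the paper: the paper states this theorem without proof, treating it as an immediate specialization of the Proposition in Section~2 with $t=4$, $s=3$, exactly as you do.

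For the deterministic polynomial-time claim, your approach is valid but differs from the methods the paper employs elsewhere. In the analogous $t=3$ situation (Theorem~2.3), the paper counts $\F_p$-points via the companion matrix $M$ of $m$: one forms $D(x_2)=\det g(M,x_2)$ and recovers the count from the $\F_p$-roots of $D$ together with the discriminant $X(x_1)$ of $g$ in $x_2$. The general machinery in Section~6 instead uses Gr\"obner bases and triangular decompositions. Your route---computing $\eta=x_2^p-x_2$ in $A=\F_p[x_1,x_2]/(m,g)$ by repeated squaring and returning the corank of multiplication by $\eta$---is a clean alternative that sidesteps the case analysis on the discriminant; it works here because $m$ already divides $x_1^p-x_1$, so killing $\eta$ forces $A/(\eta)$ to be a quotient of $\F_p^{p^2}$, hence a product of copies of $\F_p$ whose dimension counts the rational points. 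Both approaches run in time polynomial in $\deg m$ and $\log p$; yours is arguably more uniform, while the paper's companion-matrix method yields slightly more structural information (multiplicities via the discriminant) that feeds into the recursive tree construction later on.
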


The most interesting subcase is when
$s = 2$. 
From Equation~\ref{eqnp3}, we first build an ideal
\[ ( m(x_1), g(x_1, x_2)) \pmod{p} \subset \F_p[x_1, x_2].  \]
We can assume that the leading coefficient of 
$ g(x_1, x_2) $, viewed as a polynomial in $  x_2 $,
is invertible in $ \F_p[x_1]/(m(x_1))  $, thus the polynomial can be made monic.
If not, we can factor $ m(x_1) $, and use its factors as new $ m(x_1) $'s,
and restart the algorithm with $ m(x_1) $ of smaller degrees.  
So we may assume that the ideal is given as
\[ (m(x_1), x_2^{n_2} + f_2(x_1, x_2)),  \]
where $ n_2 \leq 2$ and the degree of $ x_2 $ in $ f_2 $ is less than $ n_2 $.
If $ (r, r_2) $ is a root in $ \F_p $ of the ideal, and $ r_1 $ is
the lift of $ r $ to the $ \Z_p $-root of $ m $,
then $ r_1 + p r_2 $ is a solution of $ f \pmod{p^3} $.
We compute the rational component of the ideal, and find its radical
over $ \F_p$. In the process, we may factor $ m(x) $ over $ \F_p $.
But how do we keep the information about $ p $-adic roots of $ m(x) $,
a polynomial with integer coefficients?

Our solution to this problem is to use Teichmuller lifting: 
Recall that for an element $\alpha$ in the prime finite field $\Z/p\Z$, the 
Teichmuller lifting of $\alpha$ is the unique $p$-adic integer 
$w(\alpha)\in \Z_p$ such that $w(\alpha)\equiv \alpha \mod p$ and 
$w(\alpha)^p=w(\alpha)$. If $a$ is any integer representative of $\alpha$, 
then the Teichmuller lifting of $\alpha$ can be computed by
$$w(\alpha) = \lim_{k\rightarrow \infty} a^{p^k}, \  w(\alpha) \equiv 
a^{p^t} \mod p^t.$$
Although the full Teichmuller lifting cannot be computed in finite time, 
we will see momentarily how its mod $p^t$ reduction can be computed in 
deterministic polynomial time. 

Let us now review how the mod $p^t$ reduction of the Teichmuller lift 
can be computed in deterministic polynomial time: If $m(x)\in \Z[x]$ is a 
monic polynomial of degree $d>0$ such that $m(x)\mod p$ splits as a product of 
distinct linear factors
$$m(x) \equiv \prod_{i=1}^d (x-\alpha_i) \mod p, \ \alpha_i \in \Z/p\Z, $$
then the Teichmuller lifting of $m(x)$ mod $p$ is defined to be the unique monic $p$-adic polynomial 
$\hat{m}(x) \in \Z_p[x]$ of degree $d$ such that the $p$-adic roots of $\hat{m}(x)$ are exactly the 
Teichmuller lifting of the roots of $m(x)\mod p$. That is, 
$$\hat{m}(x) =\prod_{i=1}^d (x  - w(\alpha_i)) \in \Z_p[x].$$
The Teichmuller lifting $\hat{m}(x)$ can be computed without factoring $m(x)\mod p$. Using the coefficients 
of $m(x)$, one forms a $d\times d$ companion matrix $M$ with integer entries such that  
$m(x) = \det(xI_d - M)$. Then, one can show that 
$$\hat{m}(x) = \lim_{k\rightarrow \infty} \det(xI_d -M^{p^k}), \  \hat{m}(x) \equiv \det(xI_d- M^{p^t}) \mod p^t.$$
This construction and computation of Teichmuller lifting of a single polynomial $m(x) \mod p$ 
can be extended to any triangular zero dimensional radical ideal with only rational roots as follows. 

Let $I$ be a radical ideal with only rational roots of the form 
$$I =(g_1(x_1), g_2(x_1, x_2), \cdots, g_k(x_1,\cdots, x_k))  \subset \F_p [x_1, x_2, \cdots, x_k],$$
where $g_i(x_1,\cdots, x_i)$ is a monic polynomial in $x_i$ of the form 
$$g_i(x_1,\cdots, x_i) = x_i^{n_i} + f_i (x_1, x_2, \cdots, x_i), \ n_i\geq 1$$
satisfying that the degree in $x_i$ of $f_i$ is less than $n_i$. Such a presentation of the ideal $I$ is 
called {\em triangular form}. It is clear that $I$ is a zero dimensional complete intersection. 
Using the companion matrix of a polynomial, we can easily find $n_i\times n_i$ matrices 
$M_{i-1}(x_1,..., x_{i-1}) $ whose entries are polynomials with coefficients 
in $\Z$ such that 
$$g_i(x_1,..., x_{i}) \equiv  \det(x_iI_{n_i}- M_i(x_1,..., x_{i-1})) \mod p, \ 1\leq i \leq k.$$
Recursively define the polynomial $f_i(x_1, \cdots, x_i) \in \Z/p^t\Z [x_1,\cdots, x_i]$ for $1\leq i\leq k$ 
such that 
$$f_1(x_1) \equiv \det (x_1I_{n_1} - M_0^{p^t}) \mod p^t,$$
$$f_2(x_1, x_2) \equiv \det (x_2I_{n_2} - M_1(x_1)^{p^t}) \mod (p^t, f_1(x_1)),$$
$$\cdots$$
$$f_k(x_1, \cdots, x_k) \equiv \det (x_kI_{n_k} - M_{k-1}(x_1,\cdots, x_{k-1})^{p^t}) \mod (p^t, f_1, \cdots, f_{k-1}).$$
The ideal $\hat{I} = (f_1, \cdots, f_k) \in \Z/p^t\Z [x_1,\cdots, x_i]$ is 
called the Teichmuller lifting mod $p^t$ of $I$. 
It is independent of the choice of the auxiliary integral matrices $M_i$.  
The roots of $\hat{I}$ over $\Z/p^t\Z$ are precisely the Teichmuller liftings  
mod $p^t$ of the roots of $I$ over $\F_p$. 
Each point $(r_1, \cdots, r_k)$ over $\Z/p^t\Z$ of $\hat{I}$ satisfies the condition $r_i^p \equiv r_i \mod p^t$. 

We require that $ m(x) $ be the Teichmuller lift at beginning of the algorithm.
Then we compute the Teichmuller lift of the ideal,
which is an ideal in $ \Z_p [x_1, x_2] $. We only need
it modulo $ p^4 $.
Denote the ideal by $ I_2 $.
For every root $ (r_1, r_2) $ of $ I_2 $, $ r_1 + pr_2 $ is
a solution of $ f(x) = 0 \pmod{p^3} $. Namely,
for any integer $ r_3 $,
we have $ f(r_1 + p r_2 + p^2 r_3) = 0 \pmod{p^3}. $

According to Theorem \ref{Tayg}, there exists a polynomial $ G(x_1, x_2, x_3) $ 
such that 
\[  f(x_1 + px_2  + p^2 x_3) \equiv p^3 G(x_1, x_2, x_3) \pmod{I_2},   \]
since $ I_2 \pmod{p} $ is radical. 
We have
 \begin{align*}
   f(x_1 + p x_2 + p^2 x_3) = g_{1} (x_1, x_2) p^3 x_3 + g_{0} (x_1, x_2) p^3 \pmod{(I_2, p^4)}. 
 \end{align*}	
 Hence if $ (r_1, r_2) $ is a root of $ I_2 $, then $ r_1 + pr_2 + p^2 r_3 $ is
 a root of $ f \pmod{p^4} $ iff $ (r_1, r_2, r_3) $ satisfies
 \[  g_{1} (x_1, x_2)  x_3 + g_{0} (x_1, x_2). \]
 Assume that $ g_1 \not = 0 $ and it does not vanish
 on any of the roots of $ I_2 \pmod{p} $. We count the rational roots
 of
 \[  (I_2, g_{1} (x_1, x_2)  x_3 + g_{0} (x_1, x_2))  \pmod{p}
  \subset
 \F_p [x_1, x_2, x_3].  \]  Multiplying the number by
 $ p $ gives us the number of $ \Z/(p^4) $ roots of $ f $.   

\section{Generalization to Arbitrary $t\geq 4$}
\label{sec:gen} 
We now generalize the idea for the case of $ t=4 $
to counting roots in $\Z/(p^t)$ of $ f(x) $ when $ t\geq 5 $ and 
$f$ is not identically $0$ mod $p$. (We can of course divide $f$ by 
$p$ and reduce $t$ by $1$ to apply our methods here, should $p|f$.)  
In the algorithm, we build a tree of ideals. At level $ k $, the ideals belong
to the ring $ \Z/(p^t) [x_1, x_2, \cdots, x_k] $.
The root of the tree (level 0) is $\{ 0\} \subset \Z/(p^t)$, the zero ideal.
At the next level the ideals are $ ( m(x_1)) $,
where $ m(x_1) $ is taken to be the Teichmuller lift of  $ f_i $ in 
Equation~\ref{modpfac}. 
We study how the roots in $ \Z_p $ of $ m(x_1) $ can be lifted to solutions
of $ f(x) $ in $ \Z/p^t $.

Let $ I_0, I_1, \cdots, I_k $ be the ideals in a path
from the root to a leaf. We require:
\begin{itemize}
\item $ I_0 = \{0\} \subset \Z/(p^t) $ and $ I_i \subset \Z/(p^t)[x_1, x_2, \cdots, x_i] $;  
\item $ I_i = I_{i+1} \cap \Z/(p^t)[x_1, x_2, \cdots, x_i] $ for all
  $ 0\leq i \leq k-1 $ ;
\item The ideal $ I_i \pmod{ p } $ is a zero dimensional and radical
  ideal with only rational roots in $ \F_p [x_1, x_2, \cdots, x_i] $ 
  for all $ 0\leq i\leq k $; Furthermore, $ I_i $ can be written as
  \begin{equation}
\begin{split}\label{idealform}
     &( I_{i-1},    x_i^{n_i} + f_i (x_1, x_2, \cdots, x_i))\\
     \subset &\Z/(p^t) [x_1, x_2, \cdots, x_i]
\end{split}
  \end{equation}
  where degree of $ x_i $ in $ f_i $ is less than $ n_i $.
\item The ideal $ I_i $ is (the mod-$ p^t $ part of) the  Teichmuller lift of $ I_i \pmod{p} $.
\end{itemize}

The basic strategy of the algorithm is to grow every branch of the tree until 
we reach a leaf that whose ideal allows a trivial count of the solutions. 
(In which case we output the count and terminate the branch.) 
If all branches terminate then we compute the summation of
the numbers on all the leaves as the output of the algorithm.
The tree of ideals contains complete information about the solutions of
$ f \pmod{p^t} $ in the following sense:
\begin{itemize}
\item For any ideal  $ I_i $ in the tree,
  there exists an integer $ s $, such that    $ i\leq s\leq t $,
and if $ (r_1, r_2, \cdots, r_i) $ is  a solution of $ I_i $ in
  $ ( \Z/(p^t))^i $,
  then $ r_1 + p r_2 + \cdots + p^{i-1} r_i  + p^i r $ is a solution of
  $ f(x) \pmod{p^s} $ for any integer $ r $. Denote the maximum such $s$ by $ s(I_i) $. 
\item If $ r $ is a root of $ f \pmod{p^t}$, then there exists a terminal leaf
  $ I_k $ in the tree such that 
  \[  r \equiv   r_1 + p r_2 + \cdots + p^{k-1} r_k  
  \pmod{p^k}   \]
  for some root $ (r_1, r_2, \cdots, r_k) $ of $ I_k $.
\end{itemize}

Suppose in the end of one branch we have
an ideal $ I_k \subset \Z/(p^t) [x_1, x_2, \cdots, x_k]$.
The ideal $ I_k \pmod{p} $
is zero dimensional and radical in $ \F_p $ with only rational roots.
There are two termination conditions:
\begin{itemize}
\item If $ s(I_k) \geq t $,   then each root of $ I_k $ in $ \Z_p^{k} $  
can produce $ p^{t-k} $ roots of $ f(x) $ in $ \Z/(p^t) $.
We can count the number of roots in $ \F_p^{k} $ of $ I_k $,
multiply it by $ p^{t-k} $, output the number, and terminate the branch.
\item Let $ g(x_1, x_2, \cdots, x_{k+1}) $ be the polynomial satisfying  
  \[  f(x_1+px_2 + p^2 x_3 + \cdots + p^{k-1} x_k + p^{k} x_{k+1}) \equiv
  p^{s(I_k)} g(x_1, x_2, \cdots, x_{k+1}) \pmod{I_k}.   \]
Such a polynomial exists according to Theorem \ref{Tayg}.
  Let $ D (x_1, x_2, \cdots, x_{k})$
  be the discriminant of $ g $, viewed as a polynomial in $ x_{k+1} $.
  Another termination condition is that none of the roots of $ I_k $
  vanishes on $ D $.  In this case, the count on this leaf is the number of
  rational roots of $ ( I_k, g ) \pmod{p} \subset \F_p [x_1, x_2, \cdots, x_{k+1}]. $ 
\end{itemize}

\begin{ex}
  If $ I_1 = (m(x_1)) $ where $ m(x_1) $ is the lift to $ \Z[x] $ of $ f_1 $ in Equation~\ref{modpfac},
  then $ s(I_1) = 1 $, and $ g(x_1, x_2) = x_2 (df/dx)(x_1) \pmod{p} $
  and $ gcd((df/dx) \pmod{p}, f \pmod{p}) = 1 $.
  So $ I_1 $ is a terminal leaf. \dia 
\end{ex}

If none of the conditions holds, 
let \[ g = \sum_{ j\leq t/k } g_{j}(x_1, x_2, \cdots, x_k) x_{k+1}^j.
  \]
  The degree bound $ t/k $ is due to the fact that  
  $ p^{kj} $ divides any term in the monomial expansion of
  $ f ( x_1 + p  x_2 + \cdots + p^{ k-1 }  x_k + p^{k} x_{k+1}) $ that has a
  factor $  x_{k+1}^j $. 
If any of the non-constant $g_{j}$ vanish at some rational
root of $ I_k $ in $  \F_p^{k}$ then this allows $ I_k \pmod{p} $ to decompose. 
Otherwise, for the ideal $  (I_k, g) \subset \Z/(p^t) [x_1, x_2, \cdots, 
x_{k+1}] $, we compute its  decomposition in  $ \F_p[x_1, x_2, \cdots, 
x_{k+1}] $ according to multiplicity type, 
find the radicals of the underlying ideals, and then lift them back to 
$ \Z/(p^t) [x_1, x_2, \cdots, x_{k+1}] $. 
They become the children of $ I_k $. 
Note that if $(I_k, g)$
does not have rational roots, it means that none of
the roots of $ I_k $ can be lifted to solution of $ f \pmod{p^{ s+1 }}$,
and thus the branch terminates with count $ 0 $.

\bigskip
\noindent 
{\bf Proof of Theorem~\ref{main}:} 
If $ p \leq d $ then factoring polynomials over $ \F_p $ can be done
  in time polynomial in $d$, and all the ideals in the tree are maximal.
  The number of children that $ I_k $ ($ k >1 $) can have is bounded from 
above by $ t/k $, the degree of $ g $. 
  (More precisely, number of nonterminal children is bounded from above by 
$ t/(2k) $.)
  The complexity is determined by the size of the tree, which is bounded 
from above by $ \prod_{1\leq k \leq t} (t/k) < e^t $.

  If $ p > d $ then we need to compute in the ring $ \F_p [x_1, x_2, \cdots,
  x_k]/I_k $. Observe that in~(\ref{idealform}), we must have $ n_i < t/(i-1) $
  for $ i\geq 2 $. So the ring is a linear space over $ \F_p $ with dimension
  $ \prod_{2\leq k \leq t} n_i < e^t $.   \qed 

\section{Computer Algebra Discussion}
In this section, we explain how to split ideals over $\F_p$ into triangular 
form so that the Teichmuller lift to $\Z_p$ can be computed. 
We start with the one variable case:  for any given ideal $I = (f(x)) \subset \F_p[x]$,  we can split $f(x)$ into the following form
\[ f(x) =  g_1(x)^{d_1} \cdots g_t(x)^{d_t}  g_0(x) \]
where $d_1> \cdots > d_t >0$,  the polynomials $g_1(x), \cdots, g_t(x) \in \F_p[x]$ are separable, pairwise co-prime  and each splits completely over $\F_p$, and 
$g_0(x)$ has no linear factors in $\F_p[x]$. This can be computed deterministically in time polynomial in $\log(p) \deg(f)$.  Note that, for $1 \leq i \leq t$,
each root of $g_i(x)$ has multiplicity $d_i$ in $I$. This  means that we can  count the number of $\F_p$-rational roots  of $I$ and their multiplicities  in polynomial time.  
Also,  the rational part of $I$ (i.e., excluding the part of $g_0(x)$) is decomposed into $t$ parts $g_1(x), \ldots, g_t(x)$.

Now we show how to go from $k$ variables to $k+1$ variables for any $k\geq 1$. Suppose $J =(g_1, g_2, \ldots, g_k) \subset \F_p[x_1, \ldots x_k]$ has a triangular form:
\begin{eqnarray*}
    g_1  & = &  x_1^{n_1} + r_1(x_1), \\
    g_2 & = &  x_2^{n_2} + r_2(x_1, x_2), \\
    & \vdots & \\
   g_k  & = &  x_k^{n_k} +  r_k(x_1, x_2, \ldots, x_k),
\end{eqnarray*}
where $g_i$ is monic in $x_i$ (i.e.,  the degree of $r_i$  in $x_i$  is less than  $<n_i$) for $1 \leq i \leq k$.
We further assume that $J$ is radical and completely splitting over $\F_p$, that is,   $J$ has $n_1 n_2 \cdots n_k$ distinct solutions in $\F_p^k$.  In particular,
$g_1(x_1)$ has $n_1$ distinct  roots in $\F_p$  and,  for each root $a_1 \in \F_p$ of $g_1(x_1) $, there are $n_2$ distinct $a_2 \in \F_2$ so that $(a_1, a_2)$ is a solution
of $g_2(x_1, x_2)$. In general, for  $1 \leq i < k$,  each solution $(a_1, \ldots, a_{i}) \in \F_p^i$ of $g_1, \ldots, g_i$ can be extended to $n_{i+1}$ distinct solutions
$(a_1, \ldots, a_{i}, a_{i+1}) \in \F_p^{i+1}$ of $g_{i+1}$.  For convenience, any ideal with these properties is called a {\em splitting triangular  ideal}.

Let $f \in \F_p[x_1, \ldots, x_k, x_{k+1}]$ be any nonzero polynomial which is monic in $x_{k+1}$, and let $I =(J, f)$ be the ideal generated by $J$ and $f$ in 
$\F_p[x_1, \ldots, x_k, x_{k+1}]$. We want to decompose $I$ into splitting triangular ideals, together with their multiplicities. More precisely, we want to decompose $I$  into the following form:
\begin{equation} \label{G1}
I = (J_1,  h_1^{d_1})  \cap (J_2,  h_2^{d_2})  \cdots \cap (J_m,  h_m^{d_m})  \cap (J_0, h_0),
 \end{equation}
where $J = J_1 \cap J_2 \cap \cdots \cap J_m\cap J_0$,    $I_0= (J_0, h_0)$ has no solutions in $\F_p^{k+1}$, and 
the ideals $I_i = (J_i,  h_i)\subset \F_p[x_1, \ldots, x_k, x_{k+1}]$, $1 \leq i \leq m$, are  
splitting triangular ideals and  are pairwise co-prime (hence any distinct 
pair of them have no common solutions).

To get the decomposition (\ref{G1}),  we first compute
\[        w := x_{k+1}^p -x_{k+1} \bmod G. \]
where  $G=\{g_1, g_2, \ldots, g_k, f\}$ is a Gr\"{o}bner basis under the lexicographical order with $x_{k+1} > x_k > \cdots > x_1$. 
Via the square-and-multiply method,  $w$ can be computed using O$(\log(p)^3 n^2)$ bit operations where $n= \deg(f) \cdot n_1 \cdots n_k$ is the degree
of the ideal $I$.   Next we compute the Gr\"{o}bner basis  $B$ of  $\{g_1, g_2, \ldots, g_k, f, w\}$ (under lex order with $x_{k+1} > x_k > \cdots > x_1$), 
which is radical and completely splitting (hence all of its solutions
are in $\F_p^{k+1}$ and are distinct).   This mean that we get rid of the nonlinear part $(J_0, h_0)$ in (\ref{G1}).  The ideal $(B)$ is now equal to the radical of
 the rational part of $I$.   To decompose $(B)$ into splitting triangular ideals, we view each polynomial in $B$ as a polynomial in $x_{k+1}$ with coefficient in 
$\F_p[x_1, \ldots, x_k]$. Let $t_0=0 < t_1 < \cdots < t_v$ be the distinct degrees  of $x_{k+1}$ among the polynomials in $B$. 
For $0 \leq i \leq v$, let $B_i$ denotes the set of the leading coefficient  of all $g \in B$ with $\deg(g) \leq  t_i$.   We have the chain of ideals
\[ J \subseteq (B_0) \subset  (B_1) \subset \cdots \subset (B_{v-1}) \subset (B_v) = \F_p[x_1, \ldots, x_k],\]
with the following properties:
\begin{itemize}
\item[(i)]   $1 \in B_v$,
\item[(ii)]  each $B_i$ ($1 \leq i \leq v$) is automatically a Gr\"{o}bner basis under the lex order with $x_k > \cdots > x_1$ (one can remove some redundant polynomials from $B_i$),
\item[(iii)]   for $0 \leq i <v$, each solution of $B_{i}$ that is not a solution of $B_{i+1}$ can be extended to exactly $t_{i+1}$ distinct solutions of $I$. 
\end{itemize}

We can compute a Gr\"{o}bner  basis  $C_i$ for  the colon ideal   $(B_{i+1}):(B_i)$ for $0 \leq i < v$. These $C_i$'s gives us the different components of $J$ that have
different number of solution extensions. Together with $B$,  we get different components of $(I, w)$. These components are completely splitting, but may not be in  triangular form (as stated above). We again use Gr\"{o}bner basis structure to further decompose them until  all are splitting triangular ideals $(J_i, h_i)$.  Note that computing Gr\"{o}bner bases is 
generally NP-hard. However, all of our ideals are of a special form, and 
their Gr\"{o}bner bases can be computed deterministically in polynomial time 
via the incremental method in \cite{GGV} (see also \cite{GVW}).

Finally, to get the multiplicity of each component $(J_i, h_i)$, we compute 
the Gr\"{o}bner basis for  the ideal $(J_i, f, f^{(j)})$ where $f^{(j)}$ 
denotes the $j$-th derivative of $f$
for $j=1, 2, \ldots, \deg(f)$, until the Gr\"{o}bner basis is $1$. These ideals may not be in triangular form, so may split further. But the total number of components is at most the degree of $f$. Hence the total number of bit operations used  is still polynomial in $\log(p) \deg(I)$.

\bibliographystyle{amsalpha}

\begin{thebibliography}{A}

\bibitem{cantorqp} David G.\ Cantor and Daniel M.\ Gordon,
{\it ``Factoring polynomials over $p$-adic fields,''}
Algorithmic number theory (Leiden, 2000), pp.\ 185--208,
Lecture Notes in Comput.\ Sci., 1838, Springer, Berlin, 2000.

\bibitem{denefver} Wouter Castryck; Jan Denef; and Frederik 
Vercauteren, {\it ``Computing Zeta Functions of Nondegenerate Curves,''}
International Mathematics Research Papers, vol.\ 2006, article ID
72017, 2006.

\bibitem{chambert} Antoine Chambert-Loir, {\it ``Compter (rapidement)
le nombre de solutions d'\'equations dans les corps finis,''}
S\'eminaire Bourbaki, Vol.\ 2006/2007, Ast\'erisque No.\ 317 (2008),
Exp.\ No.\ 968, vii, pp.\ 39-–90.

\bibitem{chistov} Alexander L.\ Chistov, {\it ``Efficient
Factoring [of] Polynomials over Local Fields and its Applications,''}
in I.\ Satake, editor, Proc.\ 1990 International Congress of Mathematicians,
pp.\ 1509--1519, Springer-Verlag, 1991.

\bibitem{cohenant} Henri Cohen, {\it A course in computational
algebraic number theory,} Graduate Texts in Mathematics, 138, Springer-Verlag, 
Berlin, 1993. 

\bibitem{denef} Jan Denef, {\it ``Report on Igusa's local zeta function,''} 
S\'eminaire Bourbaki 1990/1991 (730-744) in Ast\'erisque 
201--203 (1991), pp.\ 359--386.

\bibitem{gouvea} Fernando Q.\ Gouve\^ea, {\it $p$-adic
Numbers,} Universitext, 2nd ed., Springer-Verlag, 2003.  


\bibitem{Gao01}
Shuhong Gao,
{\it ``On the deterministic complexity of polynomial factoring''}, 
{\em Journal of Symbolic Computation},  31 (2001), 19--36.

\bibitem{GGV}
Shuhong Gao, Yinhua Guan and Frank Volny IV,
{\it ``A new  incremental algorithm for computing Gr\"{o}bner bases''},
the 35th International Symposium on Symbolic and Algebraic Computation (ISSAC), pp.\ 13--19, 
Munich, July 25--28, 2010.

\bibitem{GVW}
Shuhong Gao, Frank Volny IV and Mingsheng Wang,
{\it ``A new  framework  for computing Gr\"{o}bner bases''},  
 {\em Mathematics of Computation},  85 (2016),  no.\ 297,   449--465.


\bibitem{gnp} Jordi Gu\`ardia; Enric Nart; Sebastian Pauli, 
{\it ``Single-factor lifting and factorization of polynomials over 
local fields,''} Journal of Symbolic Computation 47 (2012), pp.\ 1318--1346. 

\bibitem{hjpw} Trajan Hammonds; Jeremy Johnson; Angela Patini;
and Robert M.\ Walker, {\it ``Counting Roots of Polynomials Over $\Z/p^2\Z$,''}
Math ArXiv preprint {\tt 1708.04713} .

\bibitem{igusa} Jun-Ichi Igusa, {\it Complex powers and asymptotic expansions 
I: Functions of certain types}, Journal f\"{u}r die reine und 
angewandte Mathematik, 1974 (268--269): 110–130.   

\bibitem{kedlaya} Kiran Kedlaya and Christopher Umans, {\it ``Fast polynomial 
factorization and modular composition,''} 
SIAM J.\ Comput.,  40 (2011), no.\ 6, pp.\ 1767--1802.  

\bibitem{lauder} Alan G.\ B. Lauder,
{\it ``Counting solutions to equations in many variables over finite
fields,''} Found.\ Comput.\ Math.\ 4 (2004), no.\ 3, pp.\ 221--267.

\bibitem{lauderwan} Alan G.\ B.\ Lauder and Daqing Wan, {\it
``Counting points on varieties over finite fields of small characteristic,''}
Algorithmic number theory: lattices, number fields, curves and cryptography,
pp.\ 579-–612, Math.\ Sci.\ Res.\ Inst.\ Publ., 44, Cambridge Univ.\ Press,
Cambridge, 2008.

\bibitem{lll} Arjen K.\ Lenstra; Hendrik W.\ Lenstra (Jr.);
Laszlo Lov\'asz, {\it ``Factoring polynomials with rational coefficients,''}
Math.\ Ann.\ 261 (1982), no.\ 4, pp.\ 515--534.

\bibitem{mw} Michael Maller and Jennifer Whitehead, 
{\it ``Efficient $p$-adic cell decomposition for univariate
polynomials,"} J.\ Complexity {\em 15} (1999), pp.\ 513-525.

\bibitem{bjornbm} Bjorn Poonen, {\it ``Heuristics
for the Brauer-Manin Obstruction for Curves,''} Experimental Mathematics,
Volume 15, Issue 4 (2006), pp.\ 415--420.

\bibitem{dwan} Daqing Wan, {\it ``Algorithmic theory of zeta functions over finite fields,''}
Algorithmic number theory: lattices, number fields, curves and cryptography,
pp.\ 5551-578, Math.\ Sci.\ Res.\ Inst.\ Publ., 44, Cambridge Univ.\ Press,
Cambridge, 2008.

\bibitem{zg} W.\ A.\ Zuniga-Galindo, {\it ``Computing Igusa's Local
Zeta Functions of Univariate Polynomials, and Linear Feedback Shift
Registers,''} Journal of Integer Sequences, Vol.\ 6 (2003), Article 03.3.6.

\end{thebibliography}

\end{document}